\theoremstyle{plain}
 \newtheorem{thm}{Theorem}[section]
 \newtheorem{lem}[thm]{Lemma}
 \newtheorem{prop}[thm]{Proposition}
\theoremstyle{definition}
 \newtheorem{ex}{Example}[section]
\theoremstyle{remark}
\begin{document}
\title[a note on the Jensen inequality for 
self-adjoint operators. II.]
{a note on the Jensen inequality for self-adjoint operators. II.}
\author[
Tomohiro Hayashi]{{Tomohiro Hayashi} }
\address[Tomohiro Hayashi]
{Nagoya Institute of Technology, 
Gokiso-cho, Showa-ku, Nagoya, Aichi, 466-8555, Japan}
\email[Tomohiro Hayashi]{hayashi.tomohiro@nitech.ac.jp}

\baselineskip=17pt
\keywords{operator inequality, Jensen inequality}
\subjclass[2000]{47A63}
\maketitle

\begin{abstract}
This is a continuation of our previous paper. 
We consider a certain order-like relation 
for positive operators on a Hilbert space. This 
relation is defined by using the Jensen inequality 
with respect to the square-root function. 
We show that  
this relation is antisymmetric if the operators are 
invertible. 
\end{abstract}

\section{Introduction}
This is a continuation of our previous paper \cite{H2}. 
Let $f(t)$ be a continuous, increasing concave function 
on the half line $[0,\infty)$ and let $A$ 
and $B$ 
be 
bounded 
self-adjoint operators on a Hilbert space 
${\frak H}$ with an inner product 
$\langle\cdot,\cdot\rangle$. 
In the previous paper, we consider the following problem. 
If $A$ and $B$ 
satisfy 
$
\langle 
f(A)\xi,\xi
\rangle
\leq
f(\langle 
B\xi,\xi
\rangle)
$ and 
$
\langle 
f(B)\xi,\xi
\rangle
\leq
f(\langle 
A\xi,\xi
\rangle)
$ 
for any unit vector $\xi\in {\frak H}$, 
can we conclude $A=B$? 
This problem was suggested by Professor Bourin~\cite{B}. 
In \cite{H2} we solved this problem affirmatively in the 
finite-dimensional case. We also dealt with 
some related problem in the 
infinite-dimensional case, but we could not get a 
complete answer. 
In this paper we consider the case $f(t)=\sqrt{t}$ 
and we solve this problem affirmatively under the assumption 
that two positive operators $A$ and $B$ are both invertible. 

For two positive operators $A$ and $B$, we 
introduce the new relation $A\trianglelefteq B$ 
defined by 
$
\langle A^{\frac{1}{2}}\xi,\xi\rangle
\leq 
\langle B\xi,\xi\rangle^{\frac{1}{2}}
$ 
for any unit vector $\xi\in {\frak H}$. 
Using this notation, 
we can restate the above problem as follows. 
If $A$ and $B$ 
satisfy 
$
A\trianglelefteq B
$ and 
$
B\trianglelefteq A
$, 
can we conclude $A=B$? We will show that this is true 
when $A$ and $B$ are both invertible. 
Here we remark that the usual order 
$A\leq B$ implies $A\trianglelefteq B$ thanks to 
the Jensen inequality. 
However the relation $\trianglelefteq $ is not an 
order relation. Indeed 
we will construct 
positive 
matrices $A$, $B$ and $C$ 
such that 
both $A\trianglelefteq B$ and 
$B\trianglelefteq C$ hold while 
$A\trianglelefteq C$ does not 
hold. 

The author would like to express his hearty gratitude to Professor 
Tsuyoshi Ando for valuable comments.

\section{Main Result}

Throughout this paper we assume that 
the readers are familiar with 
basic notations and results on operator 
theory. We refer the readers to 
Conway's book~\cite{C}.

We denote 
by ${\frak H}$ a 
(finite or infinite dimensional) 
complex Hilbert space 
and by $B({\frak H})$ 
all bounded linear operators on it. 
The operator norm of 
$A\in B({\frak H})$ 
is denoted by 
$||A||$. 
The inner product and the norm 
for two vectors $\xi,\eta\in {\frak H}$
are denoted by 
$\langle \xi,\eta\rangle$ and $||\xi||$ 
respectively. 
We denote the defining function 
for an interval $[a,b)$ 
by 
$\chi_{[a,b)}(t)$. 
We define the absolute value for a bounded linear 
operator $X$ by 
$
|X|=(X^{*}X)^{\frac{1}{2}}
$.

If two positive operators $A,B\in B({\frak H})$ satisfy 
$$
\langle A^{\frac{1}{2}}\xi,\xi\rangle
\leq 
\langle B\xi,\xi\rangle^{\frac{1}{2}}
$$ 
for any unit vector $\xi\in {\frak H}$, we write 
$$A\trianglelefteq B.$$
The usual order $A\leq B$ implies that $A\trianglelefteq B$. 
This is a consequence of the famous Jensen inequality 
as follows. 
$$
\langle A^{\frac{1}{2}}\xi,\xi\rangle
\leq 
\langle A\xi,\xi\rangle^{\frac{1}{2}}
\leq 
\langle B\xi,\xi\rangle^{\frac{1}{2}}.
$$ 
Here we remark that the relation 
$
\trianglelefteq
$ 
is not an order relation. Indeed there exit positive matrices 
$A$, $B$ and $C$ such that both $A\trianglelefteq B$ and 
$B\trianglelefteq C$ hold while $A\trianglelefteq C$ does not 
hold. See Example 2.1. 

The following is the main result of this paper. 
\begin{thm}
Let $A,B\in B({\frak H})$ be two positive operators such 
that $A$ is invertible. If they satisfy 
$A\trianglelefteq B$ and $B\trianglelefteq A$, 
then we have $A=B$. 
\end{thm}

Here we remark that 
it is hard to remove the assumption of invertibility. 
See Example 2.1.

\begin{prop}[Ando~\cite{A2}]
For two positive operators $A,B\in B({\frak H})$, 
the following conditions are equivalent. 
\begin{enumerate}
\item $A^{2}\trianglelefteq B^{2}$. 
\item $A\leq \frac{1}{2t}B^{2}
+\frac{t}{2}$ for any positive number $t$. 
\item There exists a contraction $C$ satisfying 
$
CB+BC^{*}=2A
$ 

\end{enumerate}
\end{prop}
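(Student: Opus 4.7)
My plan is to establish the three-way equivalence by proving (i)$\,\Leftrightarrow\,$(ii) and (iii)$\,\Leftrightarrow\,$(ii), with the implication (ii)$\,\Rightarrow\,$(iii) being the substantive step. The equivalence (i)$\,\Leftrightarrow\,$(ii) is essentially scalar AM--GM, the implication (iii)$\,\Rightarrow\,$(ii) is a one-line expansion, and the reverse direction requires an operator factorization argument.

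For (i)$\,\Leftrightarrow\,$(ii), I would use the scalar identity $\sqrt{s}=\inf_{t>0}\bigl(\frac{s}{2t}+\frac{t}{2}\bigr)$ for $s\geq 0$. Since $(A^{2})^{1/2}=A$, condition (i) reads $\langle A\xi,\xi\rangle\leq\|B\xi\|$ for every unit $\xi$, and applying the identity with $s=\|B\xi\|^{2}$ turns this into the statement that $\langle A\xi,\xi\rangle\leq\langle\frac{1}{2t}B^{2}\xi,\xi\rangle+\frac{t}{2}$ for every unit $\xi$ and every $t>0$, which is exactly (ii). For (iii)$\,\Rightarrow\,$(ii) I would expand
$$(B-tC)(B-tC^{*})=B^{2}-t(CB+BC^{*})+t^{2}CC^{*}=B^{2}-2tA+t^{2}CC^{*}\geq 0,$$
and then replace $CC^{*}$ by the larger operator $I$ (using $\|C\|\leq 1$) to obtain (ii).

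The main obstacle will be (ii)$\,\Rightarrow\,$(iii). Starting from (ii), the monic operator polynomial $P(t):=t^{2}I-2tA+B^{2}$ is positive for every $t>0$; and since $-2tA\geq 0$ for $t\leq 0$, one has $P(t)\geq B^{2}\geq 0$ there as well, so $P(t)\geq 0$ for every real $t$. The plan is to invoke the operator Fej\'er--Riesz factorization to write $P(t)=(tU+V)^{*}(tU+V)$, where matching the $t^{2}$-coefficient forces $U^{*}U=I$ (so $U$ is an isometry) and the constant term gives $V^{*}V=B^{2}$. The polar decomposition then provides $V=WB$ with $W$ a partial isometry (since $|V|=B$), and the operator $C:=-U^{*}W$ is a product of two contractions, hence a contraction. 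Matching the linear-in-$t$ coefficient yields $U^{*}V+V^{*}U=-2A$, which rewrites as $CB+BC^{*}=2A$. The delicate point is the Fej\'er--Riesz step itself; as a fallback I would follow Ando \cite{A2} and construct $C$ more directly, e.g.\ via a weak-operator compactness argument applied to the convex set $\{C:\|C\|\leq 1,\ CB+BC^{*}\leq 2A\}$ combined with an extremality argument to push the inequality to equality.
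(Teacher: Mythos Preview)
Your proposal is correct and follows essentially the same route as the paper: the equivalence (i)$\Leftrightarrow$(ii) via the scalar identity $\sqrt{s}=\inf_{t>0}\bigl(\tfrac{s}{2t}+\tfrac{t}{2}\bigr)$, the implication (iii)$\Rightarrow$(ii) by expanding a positive square (the paper uses $(CB-t)^{*}(CB-t)$ rather than your $(B-tC)(B-tC^{*})$, but the effect is identical), and the implication (ii)$\Rightarrow$(iii) via the operator-valued Fej\'er--Riesz theorem of Rosenblum--Rovnyak, followed by a polar decomposition to extract the contraction $C$. Your observation that $P(t)\geq 0$ extends automatically to $t\leq 0$ and your identification of the isometry from the $t^{2}$-coefficient match the paper's argument exactly; the fallback compactness idea is not needed.
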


\begin{proof}
The equivalence (i)$\Leftrightarrow$(ii) is shown in 
\cite{A1}. (See also \cite{H2} Lemma 3.2.) 

Suppose that there exists a contraction $C$ satisfying 
$
CB+BC^{*}=2A
$. Since 
$$
0\leq(CB-t)^{*}(CB-t)=BC^{*}CB+t^{2}-t(CB+BC^{*}),
$$ we see that 
$$
2tA=t(CB+BC^{*})\leq BC^{*}CB+t^{2}\leq B^{2}+t^{2}.
$$ 
Therefore the implication (iii)$\Rightarrow$(ii) holds. 

Finally we will show (ii)$\Rightarrow$(iii). We remark that 
the inequality 
$$
B^{2}+t^{2}-2tA\geq 0
$$ 
holds 
for any real number $t$. Thus 
by the 
operator-valued Fejer-Riesz theorem 
(\cite{RR}Theorem 3.3) 
there exist 
two bounded linear operators $X$ and $Y$ such that 
$$
B^{2}+t^{2}-2tA=(X-tY)^{*}(X-tY)
=X^{*}X+t^{2}Y^{*}Y-t(X^{*}Y+Y^{*}X).
$$ 
Therefore we have 
$B=|X|$, $|Y|=1$ and $2A=X^{*}Y+Y^{*}X$. 
Here we remark that $Y$ is a contraction because 
$|Y|=1$. 
Take a polar decomposition 
$
X=U|X|=UB
$ 
where $U$ is a partial isometry. Then we get 
$$
2A=B(U^{*}Y)+(Y^{*}U)B.
$$ 
Since $U^{*}Y$ is a contraction, we are done. 

\end{proof}

\begin{lem} 
Let $c$ and $\epsilon$ be positive numbers 
such that $\epsilon<c$. Then 
$$
2t\lambda-t^{2}>0
\ \ {\text{and}}\ \ 
\frac{\lambda^{2}}{2t}+\frac{t}{2}-
(2t\lambda-t^{2})^{\frac{1}{2}}\geq 0
$$ 
for any $c+\epsilon\leq t,\lambda\leq 2c$. 
Further there exists a positive number 
$d$ satisfying 
$$
\frac{\lambda^{2}}{2t}+\frac{t}{2}-
(2t\lambda-t^{2})^{\frac{1}{2}}\leq 
\frac{d}{2}(t-\lambda)^{2}\eqno{(9)}
$$ 
for any $c+\epsilon\leq t,\lambda\leq 2c$.
\end{lem}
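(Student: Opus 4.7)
The plan is to reduce the three statements to a single algebraic identity that factors the quantity $F(t,\lambda):=\frac{\lambda^{2}}{2t}+\frac{t}{2}-(2t\lambda-t^{2})^{\frac{1}{2}}$ as $(t-\lambda)^{2}$ times a manifestly positive and bounded factor on the compact region $R:=[c+\epsilon,2c]^{2}$.

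First I would dispose of (1). Since $2t\lambda-t^{2}=t(2\lambda-t)$ with $t>0$ and $2\lambda\ge 2(c+\epsilon)>2c\ge t$, the strict inequality $2t\lambda-t^{2}>0$ is immediate on $R$; in particular the square root in $F$ is well-defined.

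The main step is the factorization. I would split $F(t,\lambda)$ as
$$
F(t,\lambda)=\left(\frac{\lambda^{2}}{2t}+\frac{t}{2}-\lambda\right)+\left(\lambda-\sqrt{2t\lambda-t^{2}}\right).
$$
The first summand equals $\frac{(t-\lambda)^{2}}{2t}$ by direct computation. For the second, I would rationalize using $\lambda^{2}-(2t\lambda-t^{2})=(t-\lambda)^{2}$ to obtain $\lambda-\sqrt{2t\lambda-t^{2}}=\frac{(t-\lambda)^{2}}{\lambda+\sqrt{2t\lambda-t^{2}}}$. Combining,
$$
F(t,\lambda)=(t-\lambda)^{2}\left(\frac{1}{2t}+\frac{1}{\lambda+\sqrt{2t\lambda-t^{2}}}\right).
$$
This identity immediately yields (2), since the bracketed factor is positive on $R$ (both denominators are strictly positive there).

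For the third assertion I would estimate the bracketed factor from above on $R$. Using $t\ge c+\epsilon$ and $\lambda+\sqrt{2t\lambda-t^{2}}\ge\lambda\ge c+\epsilon$, the factor is bounded by $\frac{1}{2(c+\epsilon)}+\frac{1}{c+\epsilon}=\frac{3}{2(c+\epsilon)}$, so one may take $d=\frac{3}{c+\epsilon}$. Since no step requires anything beyond the factorization and elementary bounds, I do not expect a genuine obstacle; the only point to be careful about is keeping the denominators uniformly away from zero on $R$, which is why the hypothesis $\epsilon<c$ (ensuring $R$ is a nondegenerate compact subset of $(0,\infty)^{2}$) is used.
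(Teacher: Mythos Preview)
Your proposal is correct and takes a genuinely different, more direct route than the paper. The paper proves the first two assertions essentially as you do, but for the key upper bound it introduces $k(t,\lambda)=\frac{d}{2}(t-\lambda)^{2}-F(t,\lambda)$, computes $\partial_{t}^{2}k$, and uses compactness of $R$ only to bound the non-$d$ part of $\partial_{t}^{2}k$ from below, thereby choosing $d$ large enough to force convexity in $t$; since $\partial_{t}k|_{t=\lambda}=0$ and $k(\lambda,\lambda)=0$, convexity then yields $k\ge 0$. Your argument bypasses all of this calculus by exhibiting the explicit factorization
\[
F(t,\lambda)=(t-\lambda)^{2}\Bigl(\frac{1}{2t}+\frac{1}{\lambda+\sqrt{2t\lambda-t^{2}}}\Bigr),
\]
which simultaneously gives nonnegativity and an explicit upper bound $d=\frac{3}{c+\epsilon}$, whereas the paper's $d$ is non-constructive. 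The paper's convexity approach has the advantage of being a template that adapts to other pairs of enveloping functions (indeed the paper remarks that the proof is the same as in an earlier lemma), while your algebraic identity is specific to the square-root function but is sharper and more elementary here.
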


\begin{proof}
The proof is same as that of \cite{H2} 
Lemma 3.4. 

Since $c+\epsilon\leq t,\lambda\leq 2c$, we have 
$$
2t\lambda-t^{2}=t(2\lambda-t)\geq 
(c+\epsilon)\{2(c+\epsilon)-2c\}=
2(c+\epsilon)\epsilon>0.
$$ 
Next by the arithmetic-geometric mean inequality 
we have 
$
\frac{\lambda^{2}}{2t}+\frac{t}{2}\geq \lambda
$ 
and obviously 
$
\lambda^{2}\geq 2t\lambda-t^{2}
$, 
so that 
$
\lambda\geq (2t\lambda-t^{2})^{\frac{1}{2}}
$. 

Now we set 
$$
k(t,\lambda)=
\frac{d}{2}(t-\lambda)^{2}-
\frac{\lambda^{2}}{2t}-\frac{t}{2}+
(2t\lambda-t^{2})^{\frac{1}{2}}.
$$
Then we compute 
$$
\frac{\partial}{\partial t}k(t,\lambda)
=
d(t-\lambda)+
\frac{\lambda^{2}}{2t^{2}}-
\frac{1}{2}+
\frac{\lambda-t}{(2t\lambda-t^{2})^{\frac{1}{2}}}
$$ 
and 
$$
\frac{\partial^{2}}{\partial t^{2}}k(t,\lambda)
=
d-\frac{\lambda^{2}}{t^{3}}
+\frac{-(2t\lambda-t^{2})^{\frac{1}{2}}
-(\lambda-t)^{2}(2t\lambda-t^{2})^{-\frac{1}{2}}
}{2t\lambda-t^{2}}.
$$
Since $c+\epsilon\leq t\leq 2c$ and 
$c+\epsilon\leq \lambda\leq 2c$, 
we see that 
$
2t\lambda-t^{2}=t(2\lambda-t)\geq 
(c+\epsilon)\{2(c+\epsilon)-2c\}=
2(c+\epsilon)\epsilon>0
$. Thus the two-variable function 
$$
-\frac{\lambda^{2}}{t^{3}}
+\frac{-(2t\lambda-t^{2})^{\frac{1}{2}}
-(\lambda-t)^{2}(2t\lambda-t^{2})^{-\frac{1}{2}}
}{2t\lambda-t^{2}}
$$ 
is bounded below on the intervals $c+\epsilon\leq t\leq 2c$ and 
$c+\epsilon\leq \lambda\leq 2c$. 
Therefore we can find a positive constant $d$ such that 
$\frac{\partial^{2}}{\partial t^{2}}k(t,\lambda)>0$ 
on the intervals $c+\epsilon\leq t\leq 2c$ and 
$c+\epsilon\leq \lambda\leq 2c$. Then 
$k(t,\lambda)$ 
is convex with respect to $t$. 
Since 
$\frac{\partial}{\partial t}k(t,\lambda)|_{t=\lambda}=0$, 
$k(t,\lambda)$ 
in $t$ is decreasing for $c+\epsilon\leq t\leq\lambda$ 
and increasing 
for $\lambda\leq t\leq c$ so that 
$
k(t,\lambda)\geq k(\lambda,\lambda)=0
$. Thus we are done. 
\end{proof}

\begin{lem}
Let $A,B\in B({\frak H})$ be positive invertible operators 
such that $c+\epsilon\leq A\leq 2c$ for some positive numbers 
$\epsilon<c$. If they satisfy 
$$
(2tA-t^{2})^{\frac{1}{2}}
\leq B 
\leq 
\frac{A^{2}}{2t}+\frac{t}{2}
$$ 
for any positive number $t$ on the 
interval $c+\epsilon\leq t\leq 2c$, 
then we have $A=B$. 
\end{lem}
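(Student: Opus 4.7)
My plan is to reduce the problem to a scalar computation by first proving that $B$ commutes with $A$. Once this is established, the spectral theorem allows a simultaneous decomposition in which the sandwich becomes the pointwise scalar inequality $(2t\lambda-t^{2})^{1/2}\leq g(\lambda)\leq \frac{\lambda^{2}}{2t}+\frac{t}{2}$ for all $\lambda\in\mathrm{spec}(A)\subset[c+\epsilon,2c]$ and $t\in[c+\epsilon,2c]$, where $g(\lambda)$ represents the value of $B$ on the spectral fibre at $\lambda$. Setting $t=\lambda$ collapses both bounds to $\lambda$, forcing $g(\lambda)=\lambda$ and therefore $B=A$.

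The central step is commutativity. Because $\mathrm{spec}(A)\subset[c+\epsilon,2c]$, functional calculus on $A$ promotes the scalar estimate of Lemma 2.3 to the operator inequality
$$\frac{A^{2}}{2t}+\frac{t}{2}-(2tA-t^{2})^{1/2}\leq \frac{d}{2}(A-t)^{2}$$
for every $t\in[c+\epsilon,2c]$. Combined with the lower half of the assumed sandwich, this gives $0\leq M_{t}\leq \frac{d}{2}(A-t)^{2}$, where $M_{t}:=\frac{A^{2}}{2t}+\frac{t}{2}-B$. Positivity of $M_{t}$ yields the Schwarz estimate $|\langle M_{t}\xi,\eta\rangle|\leq \frac{d}{2}\|(A-t)\xi\|\cdot\|(A-t)\eta\|$ for all $\xi,\eta\in{\frak H}$.

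To obtain off-diagonal vanishing, fix disjoint Borel sets $S,S'\subset[c+\epsilon,2c]$ and unit vectors $\xi\in E_{S}{\frak H}$, $\eta\in E_{S'}{\frak H}$. Since $A^{2}$ commutes with $E_{S},E_{S'}$ and $E_{S}E_{S'}=0$, the polynomial part of $M_{t}$ pairs trivially to zero, so $\langle M_{t}\xi,\eta\rangle=-\langle B\xi,\eta\rangle$. I cover $S$ by $K=\lceil |S|/(2\delta)\rceil$ disjoint sub-intervals $I_{k}$ of half-width $\delta$ with centres $\mu_{k}$, decompose $\xi=\sum_{k}\xi_{k}$ with $\xi_{k}=E_{I_{k}\cap S}\xi$, and apply the Schwarz estimate to each piece with $t=\mu_{k}$. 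Using $\|(A-\mu_{k})\xi_{k}\|\leq \delta\|\xi_{k}\|$ and $\|(A-\mu_{k})\eta\|\leq c$, together with the Cauchy-Schwarz bound $\sum_{k}\|\xi_{k}\|\leq \sqrt{K}\,\|\xi\|$, I obtain
$$|\langle B\xi,\eta\rangle|\leq \frac{cd\delta}{2}\sum_{k}\|\xi_{k}\|\leq \frac{cd}{2}\sqrt{\delta|S|/2},$$
which tends to $0$ as $\delta\to 0$. Since the left-hand side does not depend on $\delta$, this forces $\langle B\xi,\eta\rangle=0$, hence $E_{S'}BE_{S}=0$ for every disjoint pair. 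Therefore $B$ commutes with every spectral projection of $A$, hence with $A$ itself.

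The step I expect to be the main obstacle is the summation over sub-intervals: a naive triangle inequality over the $K\sim 1/\delta$ terms yields only an $O(1)$ estimate, which is useless. The decisive gain of $\sqrt{\delta}$ comes precisely from pairing the per-interval factor $\delta$ (produced by localising the spectrum of $A$ and choosing $t=\mu_{k}$) with the Cauchy-Schwarz sum $\sum_{k}\|\xi_{k}\|\leq \sqrt{K}\,\|\xi\|$; without this observation the whole argument collapses.
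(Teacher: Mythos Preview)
Your argument is correct and takes a genuinely different route from the paper's. Both proofs exploit Lemma~2.3 together with a fine spectral partition of $A$, but the mechanisms for controlling the off-diagonal part of $B$ differ. The paper never isolates commutativity as an intermediate step; instead it inverts the sandwich to trap $(PB^{-1}P)^{-1}$ between the same two bounds as $PBP$, invokes the Schur complement identity $PBP-(PB^{-1}P)^{-1}=PBP^{\perp}(P^{\perp}BP^{\perp})^{-1}P^{\perp}BP$ to turn this into the estimate $\|P^{\perp}BP\|^{2}\leq d\,\|B\|\,\|tP-AP\|^{2}$, and then bounds $\|A-B\|$ directly over the partition. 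Your approach replaces the Schur complement by the Cauchy--Schwarz inequality for the positive form $\langle M_{t}\cdot,\cdot\rangle$; the same $O(n^{-1/2})$ gain that the paper extracts from summing the squares $\|P_{i}^{\perp}BP_{i}\|^{2}$ appears in your proof as the factor $\sqrt{K}$ coming from $\sum_{k}\|\xi_{k}\|\leq\sqrt{K}\,\|\xi\|$. Your route is slightly more elementary in that it never uses $B^{-1}$ or the Schur formula, while the paper's route avoids any appeal to the joint spectral calculus. One expository point worth tightening: once $A$ and $B$ commute, $B$ need not be a Borel function of $A$, so the phrase ``value of $B$ on the spectral fibre at $\lambda$'' is loose; the clean way to finish is to pass to the Gelfand spectrum of the commutative $C^{*}$-algebra generated by $A$, $B$ and $1$, where $A$ and $B$ become continuous functions $a,b$ and the pointwise sandwich with $t=a(x)\in[c+\epsilon,2c]$ forces $b(x)=a(x)$.
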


\begin{proof}
The proof is essentially same as that of 
\cite{A1,H1,H2}. 

First we will show 
that there exists a positive constant $d$ 
satisfying 
$$
||PBP-(PB^{-1}P)^{-1}||\leq d||tP-AP||^{2}\eqno{(1)}
$$ 
for any $c+\epsilon\leq t\leq 2c$ and any 
spectral projection $P$ of $A$, 
where we use $(PB^{-1}P)^{-1}$ to denote the 
inverse of $PB^{-1}P$ on $P{\frak H}$. 
In the following we use commutativity of 
$A$ and $P$ without any particular mention.

By assumption we have two inequalities 
$$
(2tA-t^{2})^{\frac{1}{2}}
\leq B 
\leq 
\frac{A^{2}}{2t}+\frac{t}{2}\eqno{(2)}
$$ 
and 
$$
2t(A^{2}+t^{2})^{-1}
\leq B^{-1} 
\leq 
(2tA-t^{2})^{-\frac{1}{2}}.\eqno{(3)}
$$ 
Here we remark that $(2tA-t^{2})^{-\frac{1}{2}}$ is a 
bounded operator because 
$2tA-t^{2}=t(2A-t)$ and $2A\geq 2(c+\epsilon)
>2c
\geq t>0$. On the other hand we have 
$$
(2tA-t^{2})^{\frac{1}{2}}
\leq A 
\leq 
\frac{A^{2}}{2t}+\frac{t}{2}.\eqno{(4)}
$$ 
By the inequalities (2) and (4), we see that 
$$
\pm(AP-PBP)\leq 
\frac{(AP)^{2}}{2t}+\frac{t}{2}P-
(2tAP-t^{2}P)^{\frac{1}{2}}
$$ 
and hence 
$$
||AP-PBP||\leq 
\Bigl|\Bigl|\frac{(AP)^{2}}{2t}+\frac{t}{2}P-
(2tAP-t^{2}P)^{\frac{1}{2}}\Bigr|\Bigr|.\eqno{(5)}
$$ 
By the inequality (3) we have 
$$
2t(A^{2}+t^{2})^{-1}P
\leq PB^{-1}P 
\leq 
(2tA-t^{2})^{-\frac{1}{2}}P
$$ 
and hence 
$$
(2tAP-t^{2}P)^{\frac{1}{2}}
\leq (PB^{-1}P)^{-1}
\leq 
\frac{(AP)^{2}}{2t}+\frac{t}{2}P.\eqno{(6)}
$$
By the inequalities (4) and (6) we have 
$$
\pm(AP-(PB^{-1}P)^{-1})\leq \frac{(AP)^{2}}{2t}+\frac{t}{2}P-
(2tAP-t^{2}P)^{\frac{1}{2}}
$$ 
and hence 
$$
||AP-(PB^{-1}P)^{-1}||\leq 
\Bigl|\Bigl|\frac{(AP)^{2}}{2t}+\frac{t}{2}P-
(2tAP-t^{2}P)^{\frac{1}{2}}\Bigr|\Bigr|.\eqno{(7)}
$$ 
By the inequalities (5) and (7) we get 
$$
||PBP-(PB^{-1}P)^{-1}||\leq 
2\Bigl|\Bigl|\frac{(AP)^{2}}{2t}+\frac{t}{2}P-
(2tAP-t^{2}P)^{\frac{1}{2}}\Bigr|\Bigr|.\eqno{(8)}
$$ 
By the inequality (8) and Lemma 2.3 we have 
shown the inequality (1). 

By the well-known formula known as 
Schur multiplier, we have 
$$
(PB^{-1}P)^{-1}
=PBP
-PBP^{\perp}
(P^{\perp}BP^{\perp})^{-1}
P^{\perp}BP
$$ 
and hence 
$$
PBP-(PB^{-1}P)^{-1}
=PBP^{\perp}
(P^{\perp}BP^{\perp})^{-1}
P^{\perp}BP\eqno{(9)}
$$ 
with $P^{\perp}=1-P$. Therefore 
by inequality (1) and (9) we see that 
$$
||PBP^{\perp}(P^{\perp}BP^{\perp})^{-1}
P^{\perp}BP||\leq d||tP-AP||^{2}\eqno{(10)}
$$
Then by the inequality (10) we compute 
\begin{align*}
||P^{\perp}BP||^{2}
&=||(P^{\perp}BP^{\perp})^{1/2}
(P^{\perp}BP^{\perp})^{-1/2}
P^{\perp}BP||^{2}\\
&\leq 
||B||\cdot
||(P^{\perp}BP^{\perp})^{-1/2}
P^{\perp}BP||^{2}\\
&=||B||\cdot
||PBP^{\perp}
(P^{\perp}BP^{\perp})^{-1}
P^{\perp}BP||\\
&\leq 
d||B||\cdot||tP-AP||^{2} 
\end{align*} 
and hence 
$$
||P^{\perp}BP||^{2}\leq 
d||B||\cdot||tP-AP||^{2}.\eqno{(11)}
$$
For each integer $n$, let $P_{i}\ \ (i=1,2,\cdots,n)$ 
be the spectral projections of $A$ corresponding to 
the interval 
$[c+\epsilon+\frac{(i-1)\{2c-(c+\epsilon)\}}{n},
c+\epsilon+\frac{i\{2c-(c+\epsilon)\}}{n}]$. 
Then we have $\sum_{i}P_{i}=1$ and 
$$
||
t_{i}P_{i}-AP_{i}
||
\leq 
\frac{c-\epsilon}{n}\eqno{(12)}
$$
where $t_{i}=
c+\epsilon+\frac{(i-1)\{2c-(c+\epsilon)\}}{n}
$. 
By the inequalities (11) and (12) 
we see that 
\begin{align*}
||\sum_{i=1}^{n}
P_{i}^{\perp}BP_{i}||^{2}
&=||\{\sum_{i=1}^{n}
P_{i}^{\perp}BP_{i}\}\{
\sum_{j=1}^{n}P_{j}BP_{j}^{\perp}\}||\\
&=
||\sum_{i=1}^{n}
P_{i}^{\perp}BP_{i}BP_{i}^{\perp}||\\
&\leq 
\sum_{i=1}^{n}
||P_{i}^{\perp}BP_{i}BP_{i}^{\perp}||\\
&=\sum_{i=1}^{n}
||P_{i}^{\perp}BP_{i}||^{2}\\
&\leq 
\sum_{i=1}^{n}d||B||\cdot||t_{i}P_{i}-AP_{i}||^{2}\\
&\leq
\sum_{i=1}^{n}d||B||\cdot
\frac{(c-\epsilon)^{2}}{n^{2}}
=
d||B||\cdot
\frac{(c-\epsilon)^{2}}{n}
\end{align*}
and hence  
$$
||\sum_{i=1}^{n}
P_{i}^{\perp}BP_{i}||^{2}\leq 
d||B||\cdot
\frac{(c-\epsilon)^{2}}{n}
.\eqno{(13)}
$$
Since 
$$
A-B=\sum_{i=1}^{n}(AP_{i}-
P_{i}BP_{i})+\sum_{i=1}^{n}
P_{i}^{\perp}BP_{i},
$$
by (13) we see that 
\begin{align*}
||A-B||&\leq||\sum_{i=1}^{n}(AP_{i}-
P_{i}BP_{i})||+||\sum_{i=1}^{n}
P_{i}^{\perp}BP_{i}||\\
&\leq 
\sup_{i}||AP_{i}-
P_{i}BP_{i}||+
\Bigl(
d||B||\cdot
\frac{(c-\epsilon)^{2}}{n}
\Bigr)^{\frac{1}{2}}
\end{align*}
On the other hand by (5) and Lemma 2.3 
we have 
$$
||AP_{i}-
P_{i}BP_{i}||\leq \frac{d}{2}
||tP_{i}-AP_{i}||^{2}\leq 
\frac{d}{2}
\Bigl(
\frac{c-\epsilon}{n}
\Bigr)^{2}
$$
Thus we get 
$$
||A-B||\leq 
\frac{d}{2}
\Bigl(
\frac{c-\epsilon}{n}
\Bigr)^{2}
+\Bigl(
d||B||\cdot
\frac{(c-\epsilon)^{2}}{n}
\Bigr)^{\frac{1}{2}}
$$ 
By tending $n\rightarrow\infty$ we see that 
$A=B$. 
\end{proof}

\begin{lem}
Let $A,B\in B({\frak H})$ be positive operators 
satisfying $A\trianglelefteq B$. If $A$ is invertible, 
then $B$ is also invertible. 
\end{lem}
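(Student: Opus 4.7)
The plan is to use the invertibility of $A$ to produce a uniform positive lower bound on the quadratic form $\langle A^{1/2}\xi,\xi\rangle$ for unit vectors $\xi$, and then transfer this bound to $B$ via the defining inequality of the relation $A\trianglelefteq B$.

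First I would note that since $A$ is positive and invertible, its spectrum is contained in a closed interval $[\delta,\|A\|]$ for some $\delta>0$. By the functional calculus this gives $A\geq \delta I$ and therefore $A^{1/2}\geq \delta^{1/2} I$, so that $\langle A^{1/2}\xi,\xi\rangle\geq \delta^{1/2}$ for every unit vector $\xi\in{\frak H}$.

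Next, the assumption $A\trianglelefteq B$ yields
$$
\langle B\xi,\xi\rangle^{1/2}\geq \langle A^{1/2}\xi,\xi\rangle\geq \delta^{1/2}
$$
for every unit $\xi$, so squaring gives $\langle B\xi,\xi\rangle\geq \delta$ for all unit vectors $\xi$. This means $B\geq \delta I$, and in particular $B$ is invertible.

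There is no serious obstacle here; the proof is essentially the observation that the relation $\trianglelefteq$ is an inequality of quadratic forms, so a strictly positive uniform lower bound on $A^{1/2}$ automatically forces the same kind of lower bound on $B$. No appeal to Proposition on the $A^2\trianglelefteq B^2$ characterisation or to Lemma~2.3 is needed.
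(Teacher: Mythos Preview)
Your proof is correct and is essentially identical to the paper's own argument: the paper picks $c>0$ with $c\leq A$, notes $c^{1/2}\leq \langle A^{1/2}\xi,\xi\rangle\leq \langle B\xi,\xi\rangle^{1/2}$ for unit $\xi$, and concludes $B$ is invertible. You have simply written out the same steps with a bit more detail.
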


\begin{proof}
By assumption, there exists a positive number $c$ which 
satisfies $c\leq A$. Then we have 
$$
c^{\frac{1}{2}}\langle \xi,\xi\rangle\leq 
\langle A^{\frac{1}{2}}\xi,\xi\rangle
\leq 
\langle B\xi,\xi\rangle^{\frac{1}{2}}
$$ 
for any unit vector $\xi\in {\frak H}$. 
Therefore $B$ is invertible. 
\end{proof}

\begin{lem}
Let $A$ be a positive operator and let $C$ be a contraction. 
If they satisfy 
$
CA+AC^{*}=2A
$, 
then we have $CP=P$ where $P$ is the range projection of $A$. 
\end{lem}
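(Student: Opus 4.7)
The plan is to show the stronger statement $CA=A$. Once this is established, $C\xi=\xi$ for every $\xi\in \mathrm{range}(A)$; by continuity $C$ fixes $\overline{\mathrm{range}(A)}=P{\frak H}$ as well, which is exactly $CP=P$.

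First I would rewrite the hypothesis $CA+AC^{*}=2A$ as the statement that $CA-A$ is anti-Hermitian, so $CA=A+iK$ for some bounded self-adjoint $K$. The task becomes to prove $K=0$. The contraction hypothesis $||C||\leq 1$ gives $||CA\xi||^{2}\leq ||A\xi||^{2}$ for every $\xi$; expanding $||(A+iK)\xi||^{2}$ and rearranging yields the operator inequality $K^{2}\leq i[K,A]$.

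The main step is to conclude $K=0$ via the spectral theorem for $A$. For any $\xi$ lying in a spectral subspace $E([a,a+\epsilon]){\frak H}$ one has $||(A-a)\xi||\leq \epsilon ||\xi||$. Since $\langle K\xi,\xi\rangle$ is real, the constant-in-$a$ piece drops out of the imaginary part, leaving
$$|{\rm Im}\langle A\xi,K\xi\rangle|=|{\rm Im}\langle (A-a)\xi,K\xi\rangle|\leq \epsilon ||\xi||\cdot ||K\xi||.$$
Combining this with $||K\xi||^{2}\leq \langle i[K,A]\xi,\xi\rangle=-2{\rm Im}\langle A\xi,K\xi\rangle$ gives $||K\xi||\leq 2\epsilon ||\xi||$ on each spectral subspace. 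For an arbitrary unit vector $\xi$ I would partition $[0,||A||]$ into $N$ intervals of length $\epsilon=||A||/N$, decompose $\xi=\sum_{k=1}^{N}\xi_{k}$ orthogonally using the spectral measure, and apply Cauchy--Schwarz:
$$||K\xi||\leq \sum_{k=1}^{N}||K\xi_{k}||\leq 2\epsilon \sum_{k=1}^{N}||\xi_{k}||\leq 2\epsilon \sqrt{N}=\frac{2||A||}{\sqrt{N}}.$$
Letting $N\to\infty$ forces $K\xi=0$, so $K=0$ and $CA=A$.

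The main obstacle is this spectral step. The inequality $K^{2}\leq i[K,A]$ alone does not force $K=0$ --- one has to genuinely use the positive structure of $A$. The trade-off works because the $\sqrt{N}$ loss from Cauchy--Schwarz (passing from $||\xi_{k}||^{2}$ to $||\xi_{k}||$) is exactly compensated by the $\epsilon=||A||/N$ gain available on each spectral slice.
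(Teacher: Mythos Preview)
Your proof is correct and reaches the same conclusion $CA=A$ as the paper, but by a genuinely different route. The paper observes that $(C-1)A$ is skew-adjoint (your $iK$), hence normal with spectrum on $i\mathbb{R}$; it then invokes the identity $\sigma((C-1)A)\cup\{0\}=\sigma(A^{1/2}(C-1)A^{1/2})\cup\{0\}$ together with an external lemma (\cite{AH}, Lemma~2.2) asserting that $\sigma(A^{1/2}(C-1)A^{1/2})\cap i\mathbb{R}=\{0\}$, and concludes from normality that $(C-1)A=0$. Your argument is more hands-on: from the contraction hypothesis you extract the quadratic inequality $K^{2}\le i[K,A]$ and then kill $K$ by a spectral-slicing estimate with a $\sqrt{N}$ Cauchy--Schwarz loss. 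What this buys you is self-containment --- no appeal to \cite{AH} --- and in fact your argument never uses positivity of $A$, only self-adjointness, so it is marginally more general. The paper's route is shorter once the cited lemma is in hand, and it explains the result structurally (the spectrum is forced to be $\{0\}$), whereas your approach is quantitative and closer in spirit to the spectral-partition estimates the paper uses elsewhere (Lemma~2.4).
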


\begin{proof}
This is a kind of triangle equality. 
The proof is implicitly contained in \cite{AH}. By assumption 
we have 
$
(C-1)A=A(1-C^{*})
$. This means that the operator 
$(C-1)A$ is skew-selfadjoint. Therefore the spectrum 
$\sigma((C-1)A)$ is contained in $i{\Bbb R}$. On the other hand 
we see that 
$
\sigma((C-1)A)\cup\{0\}=
\sigma(A^{\frac{1}{2}}(C-1)A^{\frac{1}{2}})\cup\{0\}
$, and by \cite{AH} Lemma 2.2 we have 
$
\sigma(A^{\frac{1}{2}}(C-1)A^{\frac{1}{2}})\cap i{\Bbb R}
=\{0\}.
$ Therefore we conclude that 
$\sigma((C-1)A)=\{0\}$. Since $(C-1)A$ is skew-selfadjoint, 
we see that $(C-1)A=0$. 
\end{proof}

\bigskip 

\begin{proof}[Proof of Theorem 2.1]
By Lemma 2.5 we may assume that both $A$ and $B$ 
are invertible. 
It is enough to show that 
two relations 
$A^{2}\trianglelefteq B^{2}$ and 
$B^{2}\trianglelefteq A^{2}$ ensure that 
$A=B$ for positive invertible operators 
$A$ and $B$. 

By Proposition 2.2 we have two inequalities 
$$
A 
\leq 
\frac{B^{2}}{2t}+\frac{t}{2}\eqno{(14)}
$$ 
and 
$$
B 
\leq 
\frac{A^{2}}{2t}+\frac{t}{2}\eqno{(15)}
$$
for any positive number $t$. 
Since $A$ is positive invertible, there exists a positive 
number $c$ satisfying 
$A\geq c$. Let $\epsilon$ be a positive number 
with $\epsilon<c$. 
It follows from (14) and Lemma 2.3 
$$
0\leq 
2tA-t^{2}\leq B^{2}
$$ 
for any $c+\epsilon\leq t\leq 2c$. 
Then since the map 
$
X\longmapsto X^{\frac{1}{2}}
$ 
is order-preserving in the cone of positive 
operators, 
we have from (15)
$$
(2tA-t^{2})^{\frac{1}{2}}
\leq B 
\leq 
\frac{A^{2}}{2t}+\frac{t}{2}.
$$ 
for any $c+\epsilon\leq t\leq 2c$. 
Let $P=\chi_{[c+\epsilon,2c]}(A)$. Then we have 
$$
(2tAP-t^{2}P)^{\frac{1}{2}}
\leq PBP 
\leq 
\frac{(AP)^{2}}{2t}+\frac{t}{2}P
$$ 
and 
$(c+\epsilon)P\leq AP\leq 2cP$. Therefore 
by Lemma 2.4 we have 
$
AP=PBP
$. 
By Proposition 2.2 there exists a contraction $D$ such that 
$$
DA+AD^{*}=2B
$$ 
and hence 
$$
PDPA+APD^{*}P=2PBP=2AP. 
$$
Then by Lemma 2.6 we see that 
$
PDP=P
$. 
Since 
$$
P=PD^{*}PDP\leq PD^{*}DP\leq P,
$$ 
we have 
$
(1-P)DP=0
$ and hence 
$
DP=PDP+(1-P)DP=P
$. By the same argument we see that 
$PD=P$. Therefore we have 
$$
2BP=(DA+AD^{*})P=DPA+AD^{*}P=2AP
$$ 
and hence $BP=PB$. Since $\epsilon$ is arbitrary, 
we have 
$$
A\chi_{(c,2c]}(A)=B\chi_{(c,2c]}(A)=\chi_{(c,2c]}(A)B. 
$$
Since the positive invertible operators 
$A(1-\chi_{(c,2c]}(A))$ and 
$B(1-\chi_{(c,2c]}(A))$ on 
$(1-\chi_{(c,2c]}(A)){\frak H}$ satisfy 
$$
\{A(1-\chi_{(c,2c]}(A))\}^{2}
\trianglelefteq
\{B(1-\chi_{(c,2c]}(A))\}^{2}
$$ 
and 
$$
\{B(1-\chi_{(c,2c]}(A))\}^{2}
\trianglelefteq
\{A(1-\chi_{(c,2c]}(A))\}^{2},
$$ 
by the same argument we see that 
$$
A\chi_{(2c,4c]}(A)=B\chi_{(2c,4c]}(A)=\chi_{(2c,4c]}(A)B.
$$
Therefore by repeating this argument we have $A=B$. 

\end{proof}

\begin{lem}
For any operator $X$, we have 
$$
{\rm Re}X\leq \frac{1}{2t}|X|^{2}
+\frac{t}{2} 
$$ 
for any positive number $t$. 
\end{lem}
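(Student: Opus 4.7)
The plan is to exploit positivity of $(X-t)^{*}(X-t)$ for each real scalar $t>0$, in direct analogy with the auxiliary computation used in the proof of Proposition 2.2 (the implication (iii)$\Rightarrow$(ii)). That argument treated the special case $X=CB$, and the lemma above is just the general scalar-free form of the same identity.

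Concretely, I would fix $t>0$, expand
\[
0\leq (X-t)^{*}(X-t)=X^{*}X-t(X+X^{*})+t^{2}=|X|^{2}-2t\,\mathrm{Re}\,X+t^{2},
\]
and then divide by $2t$ to rearrange this into
\[
\mathrm{Re}\,X\leq \frac{1}{2t}|X|^{2}+\frac{t}{2}.
\]
That is the whole proof. No spectral theory, no Fej\'er--Riesz, no Jensen inequality: just the fact that an operator of the form $Y^{*}Y$ is positive, together with the observation that $X^{*}t+tX^{*}$ and the like collapse into $2t\,\mathrm{Re}\,X$ since $t$ is a real scalar and commutes with everything.

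There is essentially no obstacle. The only minor thing to be careful about is using $t$ both as a scalar and (implicitly) as $t\cdot I$ inside the operator expression $(X-t)^{*}(X-t)$; one must make sure the cross terms $-tX-tX^{*}$ are genuinely $-2t\,\mathrm{Re}\,X$, which they are because $t\in\mathbb{R}$. The hypothesis $t>0$ enters only in the final division (to preserve the direction of the inequality), not in the positivity step.
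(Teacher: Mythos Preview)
Your proof is correct and is essentially identical to the paper's own argument: the paper also expands $0\le (X-t)^{*}(X-t)=|X|^{2}+t^{2}-2t\,\mathrm{Re}\,X$ and concludes immediately.
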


\begin{proof}
Since 
$$
0\leq (X-t)^{*}(X-t)=|X|^{2}+t^{2}-2t{\rm Re}X,
$$ 
we are done. 
\end{proof}

\begin{ex}
First we will show that there exist $2\times 2$ positive 
matrices $A$, $B$ and $C$ 
such that 
both $A^{2}\trianglelefteq B^{2}$ and 
$B^{2}\trianglelefteq C^{2}$ hold while 
$A^{2}\trianglelefteq C^{2}$ does not 
hold. 

We set 
$$
X=
\begin{pmatrix}
\sqrt{2}&1\\
0&\sqrt{2}
\end{pmatrix}
,\ \ \ \ \ 
A={\rm Re}X=
\begin{pmatrix}
\sqrt{2}&\frac{1}{2}\\
\frac{1}{2}&\sqrt{2}
\end{pmatrix}
\geq 0
$$ 
and 
$$
B=|X|=
\frac{1}{3}
\begin{pmatrix}
4&\sqrt{2}\\
\sqrt{2}&5
\end{pmatrix}.
$$
By Lemma 2.7 and Proposition 2.2 
we have $A^{2}\trianglelefteq B^{2}$. 
Next we set 
$$
Y=
\frac{1}{3}
\begin{pmatrix}
4&2\sqrt{2}\\
0&5
\end{pmatrix}
$$ 
and $C=|Y|$. Since 
$$
{\rm Re}Y=
\frac{1}{3}
\begin{pmatrix}
4&\sqrt{2}\\
\sqrt{2}&5
\end{pmatrix}
=B,
$$
we have $B^{2}\trianglelefteq C^{2}$. 
Suppose that $A^{2}\trianglelefteq C^{2}$. 
Then by Proposition 2.2 we have 
$$
A\leq \frac{1}{2t}C^{2}
+\frac{t}{2}$$ 
for any positive number $t$. Let 
$
E=
\begin{pmatrix}
1&0\\
0&0
\end{pmatrix}
$. Then we see that 
$
EAE=\sqrt{2}E
$ and 
$
E(\frac{1}{2t}C^{2}
+\frac{t}{2})E=
(\frac{1}{2t}\times\frac{16}{9}
+\frac{t}{2})E
$. Therefore we have 
$$
\sqrt{2}\leq \frac{8}{9t}
+\frac{t}{2}
$$ 
for any positive number $t$. This is impossible 
because the minimal value of the right hand side is 
$\frac{4}{3}$ while $\frac{4}{3}<\sqrt{2}$

Next we show that 
$(A+\epsilon)^{2}\trianglelefteq (B+\epsilon)^{2}$ is not 
valid 
for any positive number $\epsilon$. 
If this is the case, we have 
$$
E(A+\epsilon)E=(\sqrt{2}+\epsilon)E
\leq \frac{1}{2t}E(B+\epsilon)^{2}E
+\frac{t}{2}E
=\Bigl(\frac{9\epsilon^{2}
+24\epsilon+18
}{18t}+\frac{t}{2}\Bigr)E
$$ 
for any positive number $t$. 
Since the minimal value of 
the scalar on 
the right hand side is 
$
\frac{\sqrt{9\epsilon^{2}
+24\epsilon+18}
}{3}
$, we have 
$$
(\sqrt{2}+\epsilon)^{2}
=\epsilon^{2}+2\sqrt{2}\epsilon+2
\leq 
\Bigl(\frac{\sqrt{9\epsilon^{2}
+24\epsilon+18}
}{3}\Bigr)^{2}
=\epsilon^{2}+\frac{8}{3}\epsilon+2.
$$ 
This is obviously wrong because 
$2\sqrt{2}>\frac{8}{3}$. 
This is the reason why we cannot remove the assumption of 
invertibility. In the proof of the main theorem, the 
inequality 
$$
A\leq \frac{1}{2t}B^{2}
+\frac{t}{2}
$$ 
is crucial. So if $A$ is not invertible, we hope that 
the inequality 
$$
A+\epsilon\leq \frac{1}{2t}(B+\epsilon)^{2}
+\frac{t}{2}
$$ holds 
for any small number $\epsilon$. However this is not true 
in general.

\end{ex}

\end{document}